\documentclass[a4paper,11pt]{article}

\usepackage{graphicx}
\usepackage{amssymb,amsmath,amsthm,times,mathrsfs,fullpage}
\usepackage{tikz,wrapfig,multirow}
\usetikzlibrary{patterns,arrows,decorations.pathreplacing}
\usepackage[pdftex]{hyperref}
\usepackage{epstopdf}
\hypersetup{plainpages=True, pdfstartview=FitH, bookmarksopen=true, colorlinks=true,linkcolor=blue,citecolor=blue}

\def\qed{{\quad\rule{1mm}{3mm}\,}}

\begin{document}

\pgfdeclarelayer{background}
\pgfdeclarelayer{foreground}
\pgfsetlayers{background,main,foreground}

\newtheorem{thm}{Theorem}
\newtheorem{cor}[thm]{Corollary}
\newtheorem{lmm}[thm]{Lemma}
\newtheorem{conj}[thm]{Conjecture}
\newtheorem{pro}[thm]{Proposition}
\newtheorem{Def}[thm]{Definition}
\theoremstyle{remark}\newtheorem{Rem}{Remark}

\title{Counting Phylogenetic Networks with Few Reticulation Vertices: A Second Approach}
\author{Michael Fuchs\\
    Department of Mathematical Sciences\\
    National Chengchi University\\
    Taipei 116\\
    Taiwan \and
    En-Yu Huang\\
    Department of Mathematical Sciences\\
    National Chengchi University\\
    Taipei 116\\
    Taiwan \and
    Guan-Ru Yu\\
    Department of Mathematics\\
    National Kaohsiung Normal University\\
    Kaohsiung 824\\
    Taiwan}

\maketitle

\begin{abstract}
Tree-child networks, one of the prominent network classes in phylogenetics, have been introduced for the purpose of modeling reticulate evolution. Recently, the first author together with Gittenberger and Mansouri (2019) showed that the number ${\rm TC}_{\ell,k}$ of tree-child networks with $\ell$ leaves and $k$ reticulation vertices has the first-order asymptotics
\[
{\rm TC}_{\ell,k}\sim c_k\left(\frac{2}{e}\right)^{\ell}\ell^{\ell+2k-1},\qquad (\ell\rightarrow\infty).
\]
Moreover, they also computed $c_k$ for $k=1,2,$ and $3$. In this short note, we give a second approach to the above result which is based on a recent (algorithmic) approach for the counting of tree-child networks due to Cardona and Zhang (2020). This second approach is also capable of giving a simple, closed-form expression for $c_k$, namely, $c_k=2^{k-1}\sqrt{2}/k!$ for all $k\geq 0$.
\end{abstract}

\section{Introduction and Results}\label{intro}

Over the last decades, phylogenetic networks have become more and more popular as models in evolutionary biology. As a result, apart from biological and algorithmic studies, recently also combinatorial and probabilistic studies have been undertaken for many of the fundamental classes of phylogenetic networks; e.g., see \cite{BiLaSt,BoGaMa,DiSeWe,FuGiMa1,FuGiMa2,FuYuZh1,FuYuZh2,St}.

This short note is concerned with tree-child networks which have been introduced by Cardona et al. in \cite{CaRoVa}. We recall how they are defined. First, a (bifurcating and bicombining) rooted {\it phylogenetic network} on $\ell$ leaves is defined as a directed acyclic graph (DAG) with no double edges which has the following vertices:
\begin{itemize}
\item[(i)] a unique {\it root} with indegree $0$ and outdegree $1$;
\item[(ii)] {\it leaves} with indegree $1$ and outdegree $0$ which are bijectively labeled by $\{1,\ldots,\ell\}$;
\item[(iii)] {\it tree vertices} with indegree $1$ and outdegree $2$;
\item[(iv)] {\it reticulation vertices} with indegree $2$ and outdegree $1$.
\end{itemize}
Such a rooted phylogenetic network is called a {\it tree-child network} if every non-leaf vertex has at least one child which is not a reticulation vertex, i.e., (a) a reticulation vertex is followed by a tree vertex or a leaf; (b) not both of the children of a tree vertex are reticulation vertices; and (c) the root is followed by a tree vertex or a leaf; see Figure~\ref{tc-ex}-(a) for an example.

Denote by ${\rm TC}_{\ell,k}$ the number of tree-child networks with $\ell$ leaves and $k$ reticulation vertices. Note that if $k=0$, then this number counts phylogenetic trees (i.e., rooted, binary trees with $\ell$ labeled leaves); see \cite{OEIS}.

In \cite{FuGiMa1}, the authors obtained the following first-order asymptotics of ${\rm TC}_{\ell,k}$: for any fixed $k$, there exists a constant $c_k>0$ such that, as $\ell\rightarrow\infty$,
\begin{equation}\label{asymp-exp}
{\rm TC}_{\ell,k}\sim c_k\left(\frac{2}{e}\right)^{\ell}\ell^{\ell+2k-1};
\end{equation}
see also \cite{FuGiMa2} for some corrections of the proof of \cite{FuGiMa1}. Moreover, the authors also computed $c_k$ for small values of $k$ and obtained that
\[
c_1=\sqrt{2},\qquad c_2=\sqrt{2},\qquad c_3=\frac{2\sqrt{2}}{3}.
\]
(For $k=0$, one easily finds $c_0=\sqrt{2}/2$; see for instance the introduction of \cite{DiSeWe}.) The method of \cite{FuGiMa1} yields $c_k$ also for larger values of $k$, however, the computation becomes more and more cumbersome; see the remark at the end of Section 3 in \cite{FuGiMa1}, where it was claimed that there is probably no closed-form expression for $c_k$. This claim turns out to be wrong; see our main theorem below.

The purpose of this note is to give a second proof of (\ref{asymp-exp}) which is based on the recent algorithmic approach to the counting of tree-child networks due to Cardona and Zhang \cite{CaZh}. Moreover, this approach is capable of giving a simple, closed-form expression for $c_k$ for all $k\geq 0$.

\begin{thm}\label{main-result}
For the number ${\rm TC}_{\ell,k}$ of tree-child networks with $\ell$ leaves and $k$ reticulation vertices, for fixed $k$ and $\ell\rightarrow\infty$,
\[
{\rm TC}_{\ell,k}\sim \frac{2^{k-1}\sqrt{2}}{k!}\left(\frac{2}{e}\right)^{\ell}\ell^{\ell+2k-1}.
\]
\end{thm}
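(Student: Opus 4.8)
The plan is to exploit the fact that the \emph{shape} of the asymptotics is already settled in \cite{FuGiMa1}: for each fixed $k$ there is a constant $c_k>0$ with ${\rm TC}_{\ell,k}\sim c_k(2/e)^\ell\ell^{\ell+2k-1}$, and moreover $c_0=\sqrt{2}/2$. Hence the entire task reduces to \emph{pinning down the constant}, and the cleanest way to do that is to establish a recursion in $k$. I would extract such a recursion from the add-/remove-a-reticulation operation that underlies the algorithmic construction of Cardona and Zhang \cite{CaZh}. Concretely, I would count in two ways the set $\mathcal{A}$ of pairs $(N,r)$ where $N\in{\rm TC}_{\ell,k}$ and $r$ is a distinguished reticulation of $N$; on the one hand $|\mathcal{A}|=k\,{\rm TC}_{\ell,k}$, and on the other hand I would relate $\mathcal{A}$ to networks with one fewer reticulation.

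The bijective bookkeeping runs as follows. Given $N'\in{\rm TC}_{\ell,k-1}$ and an \emph{ordered} pair of distinct edges $(e_1,e_2)$, one subdivides $e_1$ by a new tree vertex $u$, subdivides $e_2$ by a new reticulation $r$, and adds the edge $u\to r$; for all but an asymptotically negligible fraction of pairs this yields a valid $(N,r)\in\mathcal{A}$. Conversely, from $(N,r)$ the reticulation $r$ has two parents, both necessarily tree vertices (the tree-child property forbids a reticulation directly above a reticulation), so \emph{either} in-edge of $r$ may be deleted and the two resulting degree-two vertices suppressed; each choice recovers a triple $(N',e_1,e_2)$. Thus the correspondence is generically two-to-one onto $\mathcal{A}$, giving
\[
2k\,{\rm TC}_{\ell,k}\;=\;\sum_{N'\in{\rm TC}_{\ell,k-1}}\#\{\text{valid ordered edge pairs of }N'\}+(\text{lower order}).
\]
Since a tree-child network with $\ell$ leaves and $k-1$ reticulations has $2\ell+3(k-1)-1\sim 2\ell$ edges, and for fixed $k$ the proportion of ordered edge pairs that violate acyclicity or the tree-child condition is $o(1)$, the right-hand side is $\sim{\rm TC}_{\ell,k-1}\cdot(2\ell)^2$. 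This produces the leading-order recursion
\[
{\rm TC}_{\ell,k}\;\sim\;\frac{(2\ell)^2}{2k}\,{\rm TC}_{\ell,k-1}\;=\;\frac{2\ell^2}{k}\,{\rm TC}_{\ell,k-1}.
\]

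Substituting the known asymptotic form on both sides, the powers $(2/e)^\ell$ cancel and the exponent of $\ell$ shifts correctly from $\ell+2(k-1)-1$ to $\ell+2k-1$, leaving
\[
c_k=\frac{2}{k}\,c_{k-1}.
\]
With the initial value $c_0=\sqrt{2}/2$ from \cite{DiSeWe}, this telescopes to $c_k=(2^k/k!)\,c_0=2^{k-1}\sqrt{2}/k!$, which is exactly the claimed constant. The main obstacle is not the recursion itself but the rigorous \emph{error control}: one must show that the non-generic configurations excluded above (pairs of comparable edges producing directed cycles, pairs violating the tree-child condition after suppression, and the two-to-one map failing where a preimage is invalid) contribute only terms of strictly smaller order in $\ell$, and that these estimates are uniform enough for fixed $k$ to leave the leading coefficient untouched. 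This is precisely where the exact counting framework of \cite{CaZh} is indispensable: it turns the heuristic addition/removal bijection into an identity with explicit correction terms, so that the passage to the limit can be justified term by term rather than merely asymptotically.
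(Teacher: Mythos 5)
Your route is genuinely different from the paper's proof, which never sets up a recursion in $k$: the paper isolates the \emph{star component graph} as the unique dominant component graph, computes its contribution $\mathrm{S}_{\ell,k}$ exactly via one-component tree-child networks and Lagrange inversion, extracts the asymptotics by the Laplace method (the constant simplifying to $2^k/k$), and then shows by induction on $k$ that all other component graphs contribute only ${\mathcal O}\bigl((2/e)^{\ell}\ell^{\ell+2k-3/2}\bigr)$. Your add/remove-a-reticulation double count is an attractive alternative, and your recursion $c_k=(2/k)c_{k-1}$ is indeed the correct one; in fact your hedged ``generically two-to-one'' claim can be sharpened to \emph{exactly} two-to-one, since in a tree-child network deleting either in-edge of a reticulation and suppressing can never create a double edge or break the tree-child condition (each degenerate configuration would force a tree vertex with two reticulation children, which is forbidden), so $2k\,{\rm TC}_{\ell,k}$ equals exactly the number of valid triples $(N',e_1,e_2)$. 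The genuine gap sits precisely where you wave it through: the claim that ``the proportion of ordered edge pairs that violate acyclicity\dots is $o(1)$'' is false network-by-network. The pairs that must be excluded are those with a directed path from $\mathrm{head}(e_2)$ to $\mathrm{tail}(e_1)$, and for caterpillar-shaped networks the number of such comparable edge pairs is $\Theta(\ell^2)$ --- a positive fraction of all pairs. What your argument actually requires is the \emph{averaged} bound $\sum_{N'}\#\{\text{comparable edge pairs of }N'\}=o\bigl(\ell^2\,{\rm TC}_{\ell,k-1}\bigr)$, i.e.\ that the mean number of ancestor--descendant pairs in a uniform tree-child network with $k-1$ reticulations is $o(\ell^2)$ (the truth is of order $\ell^{3/2}$, as for uniform phylogenetic trees). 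That is a substantive counting lemma in its own right, comparable in difficulty to the paper's Proposition~\ref{neg-asymp}, and nothing in your sketch supplies it.

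Your closing appeal to \cite{CaZh} does not fill this hole: the Cardona--Zhang framework consists of component graphs and the one-component count $\mathrm{O}_{\ell,k}$ of Lemma~\ref{number-o}; it contains no ``identity with explicit correction terms'' for the single-edge-insertion operation you use, and the paper deploys it in a completely different manner. Note also that your proof presupposes the asymptotic shape ${\rm TC}_{\ell,k}\sim c_k(2/e)^{\ell}\ell^{\ell+2k-1}$ from \cite{FuGiMa1}; that is legitimate for pinning down $c_k$ and hence suffices for the theorem as stated, but it makes your argument parasitic on the earlier paper, whereas the proof under review is self-contained and reproves the shape along with the constant. To complete your approach you would need to prove the average path-length estimate, e.g.\ by asymptotically counting networks carrying a marked directed path --- at which point you would be doing work of essentially the same kind as the paper's analysis of the remaining component graphs.
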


A second class of phylogenetic networks which was treated in \cite{FuGiMa1} was the class of normal networks. Here, a tree-child network is called a {\it normal network} if the two parents of each reticulation vertex are incomparable with respect to the ancestor-descendant relation, e.g., the tree-child network in Figure~\ref{tc-ex}-(a) is not normal.

It was shown in \cite{FuGiMa1} that, for fixed $k$ and $\ell\rightarrow\infty$, we have ${\rm TC}_{\ell,k}\sim N_{\ell,k}$ where $N_{\ell,k}$ denotes the number of normal networks with $\ell$ leaves and $k$ reticulation vertices. Thus, we have the following corollary.

\begin{cor}
For the number $N_{\ell,k}$ of normal networks with $\ell$ leaves and $k$ reticulation vertices, for fixed $k$ and $\ell\rightarrow\infty$,
\[
N_{\ell,k}\sim \frac{2^{k-1}\sqrt{2}}{k!}\left(\frac{2}{e}\right)^{\ell}\ell^{\ell+2k-1}.
\]
\end{cor}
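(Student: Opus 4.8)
\pf
The plan is to obtain the corollary as an immediate consequence of Theorem~\ref{main-result}, using the asymptotic transfer between the two network classes that is already available in the literature. Concretely, I would rely on exactly the two ingredients recalled in the excerpt: (i) Theorem~\ref{main-result}, which supplies the first-order asymptotics of ${\rm TC}_{\ell,k}$; and (ii) the result of \cite{FuGiMa1} asserting that ${\rm TC}_{\ell,k}\sim N_{\ell,k}$ for fixed $k$ as $\ell\rightarrow\infty$.

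The single key step is then to chain these two asymptotic equivalences. Since $\sim$ is transitive, from $N_{\ell,k}\sim {\rm TC}_{\ell,k}$ and
\[
{\rm TC}_{\ell,k}\sim \frac{2^{k-1}\sqrt{2}}{k!}\left(\frac{2}{e}\right)^{\ell}\ell^{\ell+2k-1}
\]
one immediately obtains the same asymptotics for $N_{\ell,k}$, which is precisely the assertion of the corollary. At the level of the present note this requires no further computation, and I would present the argument in a single line.

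If one instead wished to make the statement self-contained, the substantive part would be re-establishing ingredient (ii): one must show that the non-normal tree-child networks constitute an asymptotically negligible fraction of all tree-child networks with $\ell$ leaves and $k$ reticulation vertices. This is where I expect the only real difficulty to lie, since it amounts to proving that forcing the two parents of some reticulation to be comparable in the ancestor-descendant order is a strong enough constraint to cut the count down by a factor tending to zero as $\ell\rightarrow\infty$. Because this estimate has already been carried out in \cite{FuGiMa1}, my plan is simply to invoke it rather than reprove it here. \qed
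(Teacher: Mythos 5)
Your proposal matches the paper exactly: the corollary is deduced in one line by combining Theorem~\ref{main-result} with the equivalence ${\rm TC}_{\ell,k}\sim N_{\ell,k}$ established in \cite{FuGiMa1}, invoking transitivity of $\sim$. Your remark that re-establishing this equivalence is the only substantive step, and that citing \cite{FuGiMa1} suffices here, is also consistent with how the paper treats it.
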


\begin{Rem}
Instead of considering phylogenetic networks with just the leaves labeled, some authors also considered phylogenetic networks with all non-root vertices bijectively labeled; see \cite{DiSeWe,FuGiMa1,FuGiMa2}.

Denote by $\widehat{{\rm TC}}_{n,k}$ (resp. $\widehat{N}_{n,k}$) the number of such tree-child (resp. normal) networks with $n$ non-root vertices and $k$ reticulation vertices. This number is closely related to ${\rm TC}_{\ell,k}$ (resp. ${\rm N}_{\ell,k}$); see Section 5 in \cite{FuGiMa1}. From this relationship and the above results, we obtain that for fixed $k$ and $n\rightarrow\infty$,

\begin{equation}\label{cor-node}
\widehat{{\rm TC}}_{n,k}\sim\widehat{N}_{n,k}\sim\frac{\sqrt{2}}{4^k k!}\left(1-(-1)^n\right)\left(\frac{\sqrt{2}}{e}\right)^n n^{n+2k-1}.
\end{equation}
\end{Rem}

We end the introduction by giving a short outline of the structure of this note. In the next section, we will recall the approach from \cite{CaZh} which was used to (a) compute values of $\mathrm{TC}_{\ell,k}$ for small $\ell$ and $k$ and (b) obtain formulas for all $\ell$ and $k=1$ and $k=2$. In fact, this approach is also useful for obtaining asymptotic results as will be shown in Section~\ref{aa} which contains the proof of our main result. Then, in a last section, we will give a brief comparison of the asymptotic approach introduced in this paper with the one from the previous publications \cite{FuGiMa1,FuGiMa2}.

\section{The Method of Cardona and Zhang}\label{meth-CaZh}

\begin{figure}
\begin{center}
\includegraphics[scale=0.68]{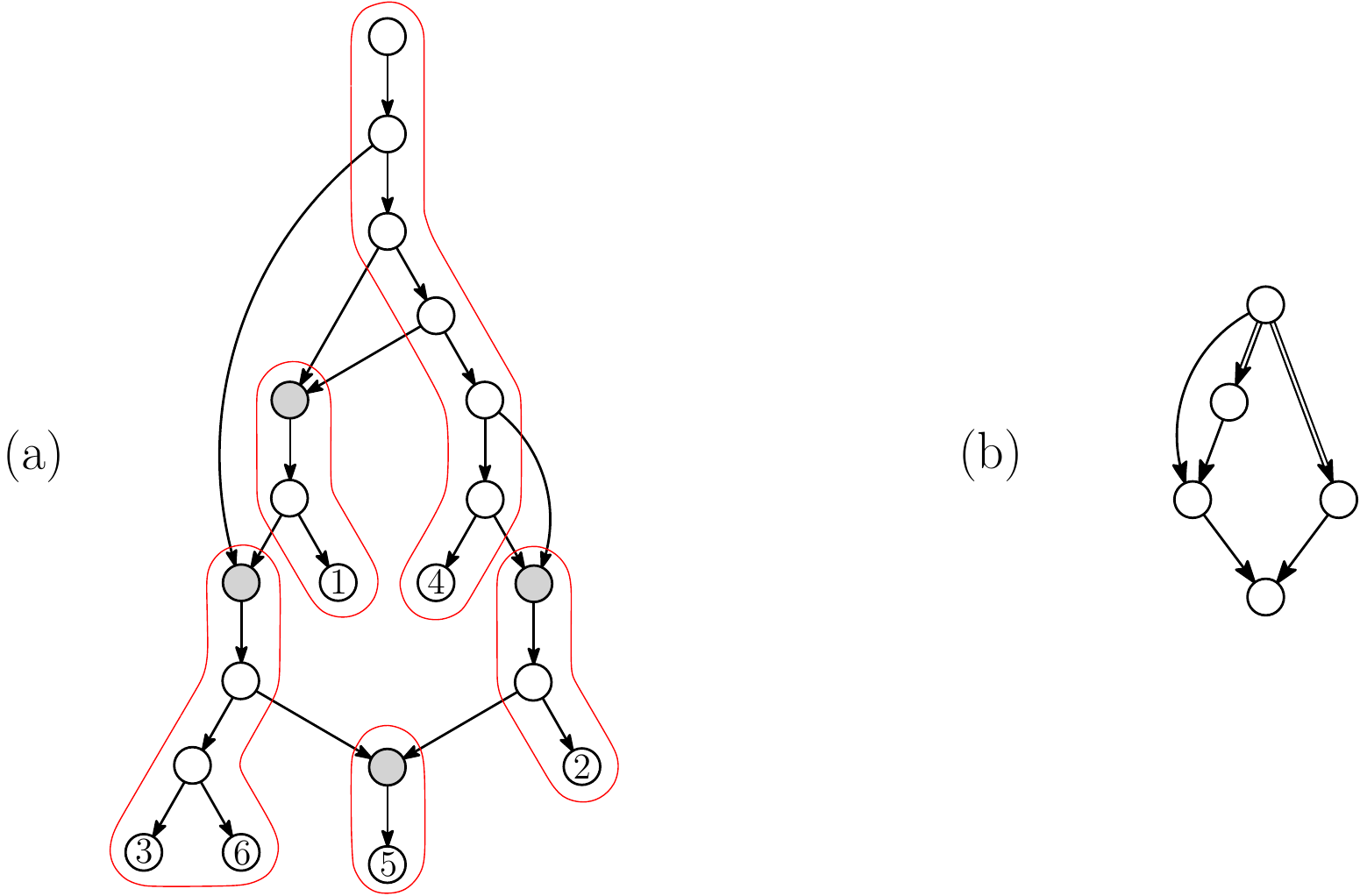}
\end{center}
\caption{(a) A tree-child network (which is not normal) with $6$ leaves and $4$ reticulation vertices (in gray). The encircled trees are obtained by deleting incoming edges of reticulation vertices; suppressing vertices of indegree and outdegree $1$ gives the tree components. (b) The corresponding component graph.}\label{tc-ex}
\end{figure}

One of the goals of \cite{CaZh} was to obtain a formula for $\mathrm{TC}_{\ell,k}$ which can be utilized to (algorithmically) compute these numbers for small values of $\ell$ and $k$. For this purpose, the authors in \cite{CaZh} proposed the notion of a \textit{component graph} whose definition we will recall next.

First, for a given tree-child network $N$ with $\ell$ leaves and $k$ reticulation vertices, deleting all incoming edges of reticulation vertices and suppressing resulting vertices with indegree and outdegree $1$ gives a forest consisting of $k+1$ trees each of which contains at least one of the $\ell$ leaves. (The latter follows from the tree-child property.) The components of this forest are called \textit{tree components}. The component graph of the tree-child network $N$ is then defined as follows: its vertex set is the set of the tree components of $N$ and edges between vertices mean that the tree components are connected via the deleted edges; see Figure~\ref{tc-ex}-(b) for an example.

Note that this definition implies that the set of component graphs is the set of rooted DAGs where double edges are allowed and each non-root vertex has indegree exactly equal to two. It is an easy exercise to show that each such DAG has at least one double edge starting from the root (two in the example from Figure~\ref{tc-ex}-(b)). Moreover, these DAGs can also be counted; see Theorem~15 in \cite{CaZh} for a formula for the number of component graphs with $m$ vertices.

All tree-child networks with $\ell$ leaves and $k$ reticulation vertices can now be obtained as follows: first list all the component graphs with $k+1$ vertices (see Figure~8 and Figure~9 in \cite{CaZh} for the lists for $k=0,1,2,3,4$); then, for each fixed component graph with $k+1$ vertices, replace the vertices by phylogenetic trees $T_1,\ldots, T_{k+1}$ and add back the deleted edges (by choosing suitable edges, adding a vertex inside them and connecting this new vertex to the root of a tree component); finally, partition the set $\{1,\ldots,\ell\}$ into $k+1$ blocks with block sizes equal to the number of leaves of $T_1,\ldots, T_{k+1}$, respectively, and re-label the leaves of the phylogenetic trees with the labels from their corresponding blocks in an order-consistent way. This gives a formula for the number of tree-child networks with $\ell$ leaves and $k$ reticulation vertices; see Theorem 16 of \cite{CaZh}.

A different way (which was also used in \cite{CaZh}) to perform the above procedure is to work with \textit{one-component tree-child networks} which are tree-child networks with each reticulation vertex  immediately followed by a leaf. (The name comes from the fact that one-component tree-child networks have only one non-trivial tree-component.)

The following formula for the number of one-component tree-child networks was proved in \cite{CaZh}.

\begin{lmm}[Theorem 13 in \cite{CaZh}]\label{number-o}
Let $\mathrm{O}_{\ell,k}$ denote the number of one-component tree-child networks with $\ell$ leaves and $k$ reticulation vertices where the labels of the leaves below the reticulation vertices are from the set $\{1,\ldots,k\}$. Then,
\[
\mathrm{O}_{\ell,k}=\frac{(2\ell-2)!}{2^{\ell-1}(\ell-k-1)!},\qquad (k\leq\ell-1).
\]
\end{lmm}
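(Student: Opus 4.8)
The plan is to build every one-component tree-child network from its unique non-trivial tree component by attaching the $k$ reticulation vertices one at a time, and to read off the count as a product. Write $n=\ell-k$ for the number of leaves of the tree component. Since the $k$ reticulation leaves carry the labels $\{1,\dots,k\}$, the tree component is an ordinary phylogenetic tree on the fixed leaf set $\{k+1,\dots,\ell\}$, and there are exactly $(2n-3)!!=\frac{(2n-2)!}{2^{n-1}(n-1)!}$ of these. I would then obtain a one-component network from such a tree by $k$ attachment steps, where the $i$-th step creates the reticulation whose terminal leaf receives the label $i$.

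For a single step I would first describe how a reticulation $r$ may be glued in: subdivide two edges by fresh tree vertices, direct both into $r$, and hang a new leaf below $r$. To keep the network tree-child and one-component, the two subdivided edges must be \emph{available}, i.e.\ incident to no reticulation (subdividing an incoming edge of a reticulation would create a tree vertex with two reticulation children, and subdividing an outgoing edge would stop that reticulation from being followed by a leaf). A short bookkeeping argument then shows that after $i-1$ reticulations have been attached the number of available edges equals $E_i=(2n-1)+2(i-1)$, independently of the earlier choices, because each attachment raises the total edge count by $5$ and the number of edges incident to some reticulation by $3$. Choosing two available edges, and allowing the degenerate case of subdividing a single edge twice (which places both parents of $r$ on one edge, and is counted once per edge), gives
\[
\binom{E_i}{2}+E_i=\binom{E_i+1}{2}=\binom{2n+2i-2}{2}
\]
valid and pairwise distinct extensions.

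The delicate step, which I expect to be the main obstacle, is to verify that processing the labels in increasing order yields a genuine bijection rather than an over- or undercount. For injectivity I would note that a labeled network determines, for each $i$, the parents of the reticulation carrying leaf $i$, and hence the choice made at step $i$. For surjectivity I would run the construction in reverse: deleting a reticulation together with its leaf and suppressing its two parents always returns a valid one-component tree-child network with one fewer reticulation. The point that makes this order-independent is that, by the tree-child property, no tree vertex can be the parent of two reticulations, so distinct reticulations have disjoint parent sets; consequently removing the reticulation with the largest label never disturbs the attachment of any other reticulation, and iterating recovers the tree component together with a unique attachment sequence. This shows the construction produces each labeled network exactly once.

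Combining the pieces gives
\[
\mathrm{O}_{\ell,k}=\frac{(2n-2)!}{2^{n-1}(n-1)!}\prod_{i=1}^{k}\binom{2n+2i-2}{2},
\]
and it remains to simplify. The product telescopes into a run of consecutive integers, $\prod_{i=1}^{k}(2n+2i-2)(2n+2i-3)=(2n+2k-2)!/(2n-2)!$, so the factors $(2n-2)!$ cancel; after collecting the powers of $2$ and substituting $n=\ell-k$ this reduces to $\frac{(2\ell-2)!}{2^{\ell-1}(\ell-k-1)!}$, as claimed. The only arithmetic to watch is the grouping of the $2k$ consecutive factors into the $k$ binomial coefficients and the accounting of the factor $2^{-k}$, which is routine.
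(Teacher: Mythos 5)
Your proof is correct, and it is necessarily a different route from the paper's, because the paper contains no proof of this statement at all: Lemma~\ref{number-o} is imported verbatim as Theorem~13 of \cite{CaZh}, so your argument is an independent, self-contained derivation of a cited result. I checked the details and they hold up: the tree component is a phylogenetic tree on the leaf set $\{k+1,\ldots,\ell\}$ with $n=\ell-k$ leaves, counted by $(2n-3)!!$; your availability condition is exactly right in both directions (subdividing an incoming edge of a reticulation creates a tree vertex with two reticulation children, subdividing the pendant edge below a reticulation destroys the one-component property, and conversely every edge not incident to a reticulation can be subdivided safely, including the root edge); the count $E_i=2n+2i-3$ follows from Lemma~\ref{tree-edges} (the network before step $i$ has $2(n+i-1)+3(i-1)-1$ edges, of which each of the $i-1$ reticulations blocks exactly $3$, these triples being pairwise disjoint); the degenerate same-edge case correctly contributes $E_i$ rather than $2E_i$ because the two stacked parents are unlabeled internal vertices, giving $\binom{E_i+1}{2}=\binom{2n+2i-2}{2}$ choices; and the peeling argument is sound, the key points being that tree-childness forces distinct reticulations to have disjoint parent sets and that in a one-component network a reticulation parent can have neither a reticulation parent nor a reticulation as its other child, so the merged edges after suppression are available in the smaller network and the attachment sequence is recovered uniquely in decreasing label order. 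The telescoping $\prod_{i=1}^{k}(2n+2i-2)(2n+2i-3)=(2n+2k-2)!/(2n-2)!$ together with the factor $2^{-k}$ and $n=\ell-k$ indeed yields $(2\ell-2)!/\bigl(2^{\ell-1}(\ell-k-1)!\bigr)$. What each approach buys: the paper's citation keeps this note short and focused on the asymptotic analysis, while your insertion-and-peeling bijection, which generalizes the classical edge-insertion proof of the $(2\ell-3)!!$ formula for phylogenetic trees, makes the count transparent and explains combinatorially why $\mathrm{O}_{\ell,k}$ factors as the tree count times a product of $k$ explicit binomial coefficients. If you write this up formally, the two places deserving a careful sentence are the ones you already flagged: the well-definedness of the peeling map (i.e., that suppression never creates a tree vertex with two reticulation children) and the order-independence of the recovery, both of which reduce to the disjoint-parent-set observation.
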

\begin{Rem}\label{rem}
\begin{itemize}
\item[(i)] For $k=0$, this gives the well-known formula $(2\ell-3)!!=(2\ell-3)\cdots 5\cdot 3\cdot 1$ for the number of phylogenetic trees on $\ell$ leaves.
\item[(ii)] Note that for any one-component tree-child network, the number of reticulation vertices is strictly smaller than the number of leaves. (This more generally holds for any tree-child network.)
\end{itemize}
\end{Rem}

Using these networks, we can now give a second construction procedure for building all tree-child networks with $\ell$ leaves and $k$ reticulation vertices from component graphs with $k+1$ vertices.

First, we will explain a reduction procedure for a component graph $C$ with $k+1$ vertices. We assume that $C$ has $1\leq t\leq k$ double edges starting from the root. First, remove all the single edges starting from the root and merge vertices with only one incoming (simple) edge with their parents. The resulting structure consists of a root with $t$ double edges which is on top of a DAG $\tilde{C}$ which has $t$ roots and again every non-root vertex has indegree exactly equal to two; see Step (i) of Figure~\ref{red-comp} for examples. In the next step, remove edges of $\tilde{C}$ until it decomposes into $t$ connected DAGs each of which is rooted at one of the $t$ roots of $\tilde{C}$. Note that this is always possible: for instance for each non-root vertex in $\tilde{C}$ whose two incoming edges are not a double edge, pick one of the edges and remove it. Next, again merge vertices with only one incoming edge with their parents; see Step (ii) of Figure~\ref{red-comp} for examples. We call the resulting DAG after these two steps a {\it reduced component graph} of $C$: it consists of a root with $t$ outgoing double edges and the children of this root are the roots of DAGs $D_1,\ldots,D_t$ which are again component graphs with $\beta_1,\ldots,\beta_t$ non-root vertices. Set $m:=\beta_1+\cdots+\beta_t$ and note that (i) $t+m\leq k$ and (ii) exactly $k-t-m$ edges have been removed in the above two steps.

\begin{figure}
\begin{center}
\includegraphics[scale=0.68]{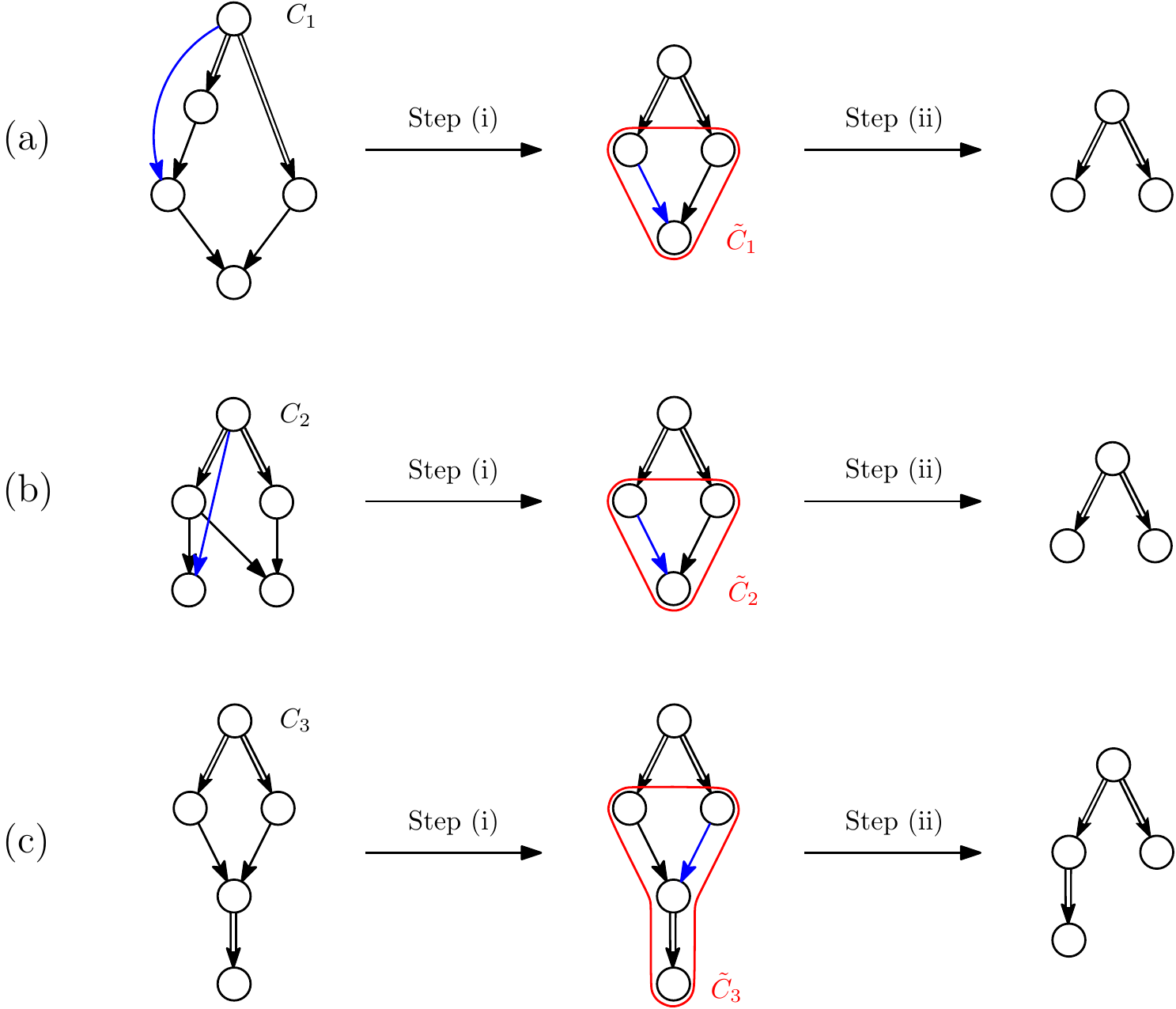}
\end{center}
\caption{Three component graphs and their reductions constructed via the two steps from Section~\ref{meth-CaZh} (the edges which are removed are highlighted in blue). The reduced component graphs of (a) and (b) are unique and coincide; $t=2$ and $\beta_1=\beta_2=0$. In (c), depending on which single edge is removed in Step (ii), two reduced component graphs are possible; for the one displayed, $t=2, \beta_1=1$ and $\beta_2=0$.}\label{red-comp}
\end{figure}

It is important to note that more than one possible choice of a reduced component graph might be possible for a component graph. Also, different component graphs might yield the same reduced component graph; see Figure~\ref{red-comp}.

Now, in order to construct all tree-child networks with $\ell$ leaves and $k$ reticulation vertices, we start again from the component graphs which are first reduced. Then, for a reduced component graph, the root with its outgoing double edges is replaced by a one-component tree-child network $O$ with $t$ reticulation vertices which are followed by leaves which are labeled by $\{1,\ldots,t\}$ (notation is as above). Here, the incoming edges of the reticulation nodes correspond to the outgoing double edges and the leaves below them correspond to the roots of $D_1,\ldots,D_t$. Then, these leaves are replaced by tree-child networks $N_1,\ldots,N_t$ whose component graphs are given by $D_1,\ldots,D_t$. Next, the $k-t-m$ removed edges from the reduction steps above are re-attached (by choosing two suitable edges, adding a vertex inside them and connecting the added vertices). Finally, the set $\{1,\ldots,\ell\}$ is partitioned into $t+1$ blocks the first of which has size equal to the number of leaves of $O$ which are not below reticulation vertices and the remaining have sizes equal to the number of leaves of $N_1,\ldots,N_t$. Then, the leaves of the above constructed network are re-labeled with their corresponding blocks in an order-consistent way.

In order to give an example for the construction just described, consider the component graph in Figure~\ref{tc-ex}-(b) whose reduced component graph is displayed in Figure~\ref{red-comp}-(a). Note that $t=2$ and $D_1,D_2$ consist both of a single root (thus, $m=0$ and consequently $2$ edges have been removed). Now, we need to pick a one-component tree-child network with $2$ reticulation vertices and replace the two leaves below the reticulation vertices both by phylogenetic trees. Finally, we add an edge between the one-component tree-child network and one of the phylogenetic trees and an edge between the two phylogenetic trees. Then, re-labeling gives, e.g., the tree-child network in Figure~\ref{tc-ex}-(a).

This approach was used in \cite{CaZh} to find formulas for ${\rm TC}_{\ell,k}$ for $k=1,2$ and all $\ell$; see the proofs of the results in Section~5 in \cite{CaZh}. We will use this approach to prove our main result. The strategy is as follows: we will identify one (special) component graph which gives the main contribution; see Proposition~\ref{main-asymp} below. For all the other component graphs, we will give an upper bound on the number of corresponding tree-child networks arising from these component graphs from the above procedure (by allowing that networks might be counted multiple times) and this upper bound will turn out to be asymptotically negligible; see Proposition~\ref{neg-asymp} below.

Before going on, we recall a simple lemma which has been stated before and whose simple proof we leave to the interested reader. This lemma will be used to count the number of ways of adding back edges in the construction above. Recall that an edge of a phylogenetic network is called \textit{reticulation edge} if its tail is a reticulation vertex and \textit{tree edge} otherwise.

\begin{lmm}\label{tree-edges}
A phylogenetic network with $\ell$ leaves and $k$ reticulation vertices has $2k$ reticulation edges and $2\ell+k-1$ tree edges; consequently, the total number of edges is $2\ell+3k-1$.
\end{lmm}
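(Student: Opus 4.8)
The plan is a routine degree-counting (handshaking) argument. I would first determine the number of tree vertices, then read off the total number of edges, and finally split the edges into the two claimed types.

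For the first step, write $T$ for the number of tree vertices and $E$ for the total number of edges. Since every edge has exactly one tail and one head, summing outdegrees over all vertices gives $E$, and summing indegrees over all vertices also gives $E$. Using the indegree/outdegree specification from the definition of a phylogenetic network (root: indegree $0$, outdegree $1$; leaf: indegree $1$, outdegree $0$; tree vertex: indegree $1$, outdegree $2$; reticulation vertex: indegree $2$, outdegree $1$), the outdegree sum is $E = 1 + 2T + k$, while the indegree sum is $E = \ell + T + 2k$. Equating the two gives $T = \ell + k - 1$, and substituting this back yields $E = 2\ell + 3k - 1$, the claimed total.

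For the final step, I would split $E$ by looking at the heads of edges. The reticulation edges are the incoming edges of the reticulation vertices; since each of the $k$ reticulation vertices has indegree $2$ and each edge has a single, uniquely determined head, the number of edges ending at a reticulation vertex is $\sum_{v\ \mathrm{reticulation}}\mathrm{indeg}(v)=2k$. Every remaining edge is then a tree edge, so the number of tree edges is $E - 2k = 2\ell + k - 1$, as claimed.

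There is essentially no obstacle here beyond careful bookkeeping. The only point that needs attention is the asymmetric role of the root, which has indegree $0$ but outdegree $1$ and hence contributes to the outdegree sum but not the indegree sum; this asymmetry is exactly what makes the two expressions for $E$ differ and pins down $T$. Everything else is a direct consequence of the vertex-degree constraints in the definition, so I expect the proof to be short.
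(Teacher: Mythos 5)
Your proof is correct, and the paper itself offers no proof to compare against: the authors explicitly leave the ``simple proof \ldots\ to the interested reader.'' Your double count of the edge set via tails and heads, which pins down the number of tree vertices as $T=\ell+k-1$ and hence $E=2\ell+3k-1$, is the standard argument they clearly had in mind, and your handling of the root's degree asymmetry is exactly the point that makes it work. One small remark: the paper defines a reticulation edge as one whose \emph{tail} is a reticulation vertex, which, read literally, would give only $k$ such edges (each reticulation vertex has outdegree $1$); your interpretation --- the incoming edges of reticulation vertices, i.e., edges whose head is a reticulation vertex --- is the one consistent with the lemma's count of $2k$, so the paper's wording is a slip and your bookkeeping resolves it correctly.
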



\section{Asymptotic Analysis}\label{aa}

In this section, we will prove our main result (Theorem~\ref{main-result}) for the number $\mathrm{TC}_{\ell,k}$ of tree-child networks with $\ell$ leaves and $k$ reticulation vertices. For this purpose, we will use the second construction procedure from the previous section. The main observation is that the main term of the asymptotics will arise from the tree-child networks that are constructed from the \textit{star component graph of size $k$} which is the component graph consisting of a root which has $k$ children; see Figure~\ref{comp-graphs}. This case will be treated in the first paragraph below. Then, in the second paragraph, we will finish our asymptotic analysis by showing that the contribution of all other component graphs is asymptotically negligible.

\paragraph{Star Component Graph.} Denote by $\mathrm{S}_{\ell,k}$ the number of tree-child networks with $\ell$ leaves and $k$ reticulation vertices that arise from the star component graph of size $k$. We have the following formula.

\begin{figure}
\begin{center}
\includegraphics[scale=0.75]{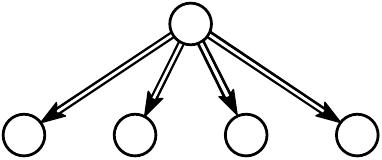}
\end{center}
\caption{The star component graph of size $4$.}\label{comp-graphs}
\end{figure}

\begin{lmm}
For $k\geq 1$, we have
\begin{equation}\label{S-l-k}
\mathrm{S}_{\ell,k}=\frac{\ell!}{2^{\ell-1}(k-1)!}\sum_{j=1}^{\ell-k}\frac{(2j+2k-2)!}{ j!(j-1)!}\cdot\frac{(2\ell-2j-k-1)!}{(\ell-j-k)!(\ell-j)!}.
\end{equation}
\end{lmm}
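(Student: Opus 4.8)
The plan is to analyze the second construction procedure from Section~\ref{meth-CaZh} specialized to the star component graph of size $k$. For this graph we have $t=k$ and $\beta_1=\cdots=\beta_k=0$, so $m=0$ and no edges are removed in the reduction; the star graph is already reduced. Consequently every network counted by $\mathrm{S}_{\ell,k}$ is built from a single one-component tree-child network $O$ with $k$ reticulation vertices (playing the role of the root together with its $k$ double edges), whose $k$ reticulation-leaves are then replaced by $k$ phylogenetic trees $N_1,\ldots,N_k$. I would introduce $j$ as the number of leaves of $O$ that do not sit below a reticulation vertex; since $O$ has $k$ reticulation vertices it has $j+k$ leaves with $j\geq 1$ by Remark~\ref{rem}(ii), and since each $N_i$ needs at least one leaf we get $j\leq\ell-k$, which already explains the summation range.

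Next I would count, for fixed $j$. By Lemma~\ref{number-o} there are $\mathrm{O}_{j+k,k}=(2j+2k-2)!/(2^{j+k-1}(j-1)!)$ choices for $O$ (with its reticulation-leaves carrying the labels $\{1,\ldots,k\}$). Writing $s_1,\ldots,s_k$ for the leaf numbers of $N_1,\ldots,N_k$, so that $s_1+\cdots+s_k=\ell-j$, each $N_i$ can be chosen in $(2s_i-3)!!$ ways, and the labels $\{1,\ldots,\ell\}$ are distributed over the $j$ non-reticulation leaves of $O$ and the trees $N_i$ in $\ell!/(j!\,s_1!\cdots s_k!)$ order-consistent ways. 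The subtle point is over-counting: the star graph has automorphism group $S_k$ permuting its $k$ children, so the labels $\{1,\ldots,k\}$ on the reticulation-leaves of $O$ are artificial. Since the final network has all leaves distinctly labeled it is rigid, and its $k$ reticulation vertices are distinguished by the disjoint label sets of the trees hanging below them; hence each network is produced exactly $k!$ times and I must divide by $k!$. This gives
\[
\mathrm{S}_{\ell,k}=\frac{1}{k!}\sum_{j=1}^{\ell-k}\mathrm{O}_{j+k,k}\,\frac{\ell!}{j!}\sum_{\substack{s_1+\cdots+s_k=\ell-j\\ s_i\geq 1}}\prod_{i=1}^k\frac{(2s_i-3)!!}{s_i!}.
\]

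The computational heart of the argument, and the step I expect to be the main obstacle, is evaluating the inner convolution. I would recognize $\sum_{s\geq 1}\frac{(2s-3)!!}{s!}x^s=1-\sqrt{1-2x}$ (the exponential generating function of phylogenetic trees), so that the inner sum equals $[x^{\ell-j}](1-\sqrt{1-2x})^k$. Setting $g(x)=\tfrac12(1-\sqrt{1-2x})$, which satisfies $g=x\phi(g)$ with $\phi(w)=1/(2(1-w))$, Lagrange inversion yields $[x^n]g^k=\frac{k}{n}[w^{n-k}]\phi(w)^n=\frac{k}{n}\,\frac{1}{2^n}\binom{2n-k-1}{n-1}$, whence with $n=\ell-j$,
\[
[x^{\ell-j}](1-\sqrt{1-2x})^k=2^k[x^{\ell-j}]g^k=\frac{k}{2^{\ell-j-k}}\cdot\frac{(2\ell-2j-k-1)!}{(\ell-j)!(\ell-j-k)!}.
\]

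Finally I would substitute this together with the formula for $\mathrm{O}_{j+k,k}$ into the expression for $\mathrm{S}_{\ell,k}$. Collecting the powers of two gives $2^{-(j+k-1)}\cdot 2^{-(\ell-j-k)}=2^{-(\ell-1)}$, the factor $k$ from Lagrange inversion combines with $1/k!$ to give $1/(k-1)!$, and the remaining factorials rearrange into exactly the claimed sum~(\ref{S-l-k}). The only genuinely delicate issues are justifying the $k!$ over-counting, which I would argue via rigidity of leaf-labeled networks, and the Lagrange-inversion coefficient extraction; everything else is bookkeeping of factorials.
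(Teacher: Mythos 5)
Your proposal is correct and follows essentially the same route as the paper's proof: the same star-component-graph decomposition with $\mathrm{O}_{j+k,k}$ from Lemma~\ref{number-o}, the same identification of the inner convolution $\sum_{s_1+\cdots+s_k=\ell-j}\prod_{i=1}^k (2s_i-3)!!/s_i! = [z^{\ell-j}]T(z)^k$ with $T(z)=1-\sqrt{1-2z}$, and the same Lagrange-inversion extraction (your normalization $g=T/2$ with $\phi(w)=1/(2(1-w))$ versus the paper's direct use of $T=z+T^2/2$ is immaterial, and your $\binom{2n-k-1}{n-1}$ agrees with the paper's $\binom{2n-k-1}{n-k}$ by symmetry). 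Your explicit rigidity argument for the $k!$ overcount is a slightly more careful rendering of the paper's terser statement that $T(z)^k$ enumerates sequences of $k$ trees and the factor $1/k!$ removes the order.
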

\begin{proof}
We use the second construction from the previous section. First, note that the reduced component graph of the star component graph of size $k$ is the graph itself. Thus, $t=k$ and $\beta_1=\cdots=\beta_k=0$. Consequently, we first need to pick a one-component tree-child network with $k$ reticulation vertices; then, we replace all the leaves below the reticulation vertices by phylogenetic trees; and finally, we re-label (in an order consistent way) all leaves of the resulting network such that only labels from the set $\{1,\ldots,\ell\}$ are used.

Based on this, we can give the following formula:
\begin{equation}\label{star-formula}
\mathrm{S}_{\ell,k}=\sum_{j=1}^{\ell-k}\binom{\ell}{j}\frac{(2j+2k-2)!}{2^{j+k-1}(j-1)!}\cdot\frac{1}{k!}(\ell-j)![z^{\ell-j}]T(z)^{k},
\end{equation}
where $T(z)$ is the exponential generating function of the number of phylogenetic trees (see Remark~\ref{rem}, (ii)) and $[z^n]f(z)$ denotes the $n$-th coefficient of the Maclaurin series of $f(z)$.

The factors inside the above sum are explained as follows: the factor $(2j+2k-2)!/(2^{j+k-1}(j-1)!)$ counts the number of one-component tree-child networks with $k$ reticulation vertices and $k+j$ leaves of which the ones with labels $\{1,\ldots,k\}$ are below the $k$ reticulation vertices; see Lemma~\ref{number-o}. The latter leaves are replaced by a set of $k$ phylogenetic trees whose number is given by $((\ell-j)![z^{\ell-j}]T(z)^{k})/k!$ since $T(z)^{k}$ enumerates sequences of $k$ phylogenetic trees and the factor $1/k!$ removes the order. Finally, the binomial coefficient takes care of all the possibilities of re-labeling the leaves.

Now, since we know from Lemma~\ref{number-o} (see also the remark after the lemma) that
\[
[z^{\ell}]T(z)=\frac{(2\ell-2)!}{2^{\ell-1}(\ell-1)!}=(2\ell-3)!!,
\]
a straightforward computation gives
\[
T(z)=\sum_{\ell\geq 1}(2\ell-3)!!\cdot\frac{z^{\ell}}{\ell!}=1-\sqrt{1-2z}
\]
which satisfies the equation $T(z)=z+T(z)^2/2$. (This equation could also be derived directly from the definition of phylogenetic trees and symbolic combinatorics; see Section 3.9 in \cite{FlSe}.)

Thus, from the Lagrange inversion formula (see, e.g., p126 in \cite{FlSe}),
\begin{align*}
[z^{\ell-j}]T(z)^k&=\frac{k}{(\ell-j)}[\omega^{\ell-j-k}](1-\omega/2)^{-(\ell-j)}\\
&=\frac{k}{(\ell-j)}2^{j+k-\ell}\binom{-(\ell-j)}{\ell-j-k}(-1)^{\ell-j-k}\\
&=\frac{k}{(\ell-j)}2^{j+k-\ell}\binom{2\ell-2j-k-1}{\ell-j-k}.
\end{align*}

Plugging this into (\ref{star-formula}) and re-arrangement gives the claimed result.
\end{proof}

We apply now the Laplace method (see, e.g., Section 4.7 in \cite{FlSe}) to the sum from the last lemma in order to obtain the asymptotics of $\mathrm{S}_{\ell,k}$.

\begin{lmm}
For $k\geq 1$, we have, as $\ell\rightarrow\infty$,
\[
\mathrm{S}_{\ell,k}\sim \frac{\sqrt{2}d_k}{2(k-1)!}\left(\frac{2}{e}\right)^{\ell}\ell^{\ell+2k-1},
\]
where
\[
d_k:=\sum_{j\geq 0}\frac{(2j+k-1)!}{j!(j+k)!}4^{-j}.
\]
\end{lmm}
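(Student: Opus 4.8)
The plan is to evaluate the finite sum in \eqref{S-l-k} by the Laplace method for sums (Section~4.7 in \cite{FlSe}). The first task is to locate where the summand concentrates. Writing $a_j$ for the $j$-th summand, a quick inspection of the two factorial factors shows that the mass does \emph{not} sit at an interior point but near the upper endpoint $j=\ell-k$, where the one-component part is as large as possible. I would therefore reindex by $i:=\ell-k-j$, so that $i$ runs over $0,1,\dots,\ell-k-1$ and the dominant terms correspond to bounded $i$. Under this substitution the reindexed summand
\[
b_i:=\frac{(2\ell-2i-2)!}{(\ell-k-i)!(\ell-k-i-1)!}\cdot\frac{(2i+k-1)!}{i!(i+k)!}
\]
splits cleanly: the first factor carries all the dependence on $\ell$, while the second factor is exactly $\frac{(2i+k-1)!}{i!(i+k)!}$, the general term of $d_k$ stripped of its geometric weight.

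Next I would compute the termwise limit. Fixing $i$ and applying Stirling's formula to the $\ell$-dependent factor — most transparently after writing it as the central binomial coefficient $\binom{2(\ell-i)-2}{\ell-i-1}$ times a ratio of descending factorials $\sim\ell^{2k-1}$ — yields
\[
\frac{(2\ell-2i-2)!}{(\ell-k-i)!(\ell-k-i-1)!}\sim\frac{4^{\ell}}{4\sqrt{\pi}}\,4^{-i}\,\ell^{2k-3/2}\qquad(\ell\to\infty).
\]
The crucial feature is the factor $4^{-i}$ produced here: multiplied by the $\ell$-free second factor it gives the term $4^{-i}\frac{(2i+k-1)!}{i!(i+k)!}$ of the series defining $d_k$, and since this term decays like $i^{-3/2}$ the series converges. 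Thus, termwise, each $b_i$ divided by the common prefactor $\frac{4^\ell}{4\sqrt\pi}\ell^{2k-3/2}$ tends to the corresponding term of $d_k$.

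The main obstacle is precisely the step the Laplace method is designed to license: interchanging the limit $\ell\to\infty$ with the summation over $i$. The fixed-$i$ Stirling estimate degrades once $i$ grows with $\ell$, and it breaks down entirely near the other endpoint $i=\ell-k-1$ (where $\ell-k-i-1=0$), so a pointwise limit alone does not suffice. I would secure the interchange by dominated convergence: using Stirling's formula with explicit error bounds (or, equivalently, a monotonicity/ratio argument for $b_{i+1}/b_i$, whose limiting value is governed by the competing factors $\tfrac14$ and $\tfrac{(2i+k+1)(2i+k)}{(i+1)(i+k+1)}$), I would produce a constant $C$ and a summable majorant $v_i=O(i^{-3/2})$, independent of $\ell$, with $b_i\big/\big(\tfrac{4^\ell}{4\sqrt\pi}\ell^{2k-3/2}\big)\le C v_i$ uniformly over the whole range of $i$; a direct check that the terms with $i\ge\varepsilon\ell$ are superpolynomially small relative to the bulk then confirms they contribute nothing to leading order. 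Granting this,
\[
\sum_{i=0}^{\ell-k-1}b_i\sim\frac{4^{\ell}}{4\sqrt{\pi}}\,\ell^{2k-3/2}\,d_k.
\]

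Finally I would reinsert the prefactor $\frac{\ell!}{2^{\ell-1}(k-1)!}$ from \eqref{S-l-k} and simplify. Collecting the powers of two gives $\frac{\ell!}{2^{\ell-1}}\cdot\frac{4^\ell}{4}=2^{\ell-1}\ell!$, and one application of Stirling to $\ell!$ turns $2^{\ell-1}\ell!/\sqrt{\pi}$ into $\frac{\sqrt2}{2}\left(\frac{2}{e}\right)^{\ell}\ell^{\ell+1/2}$; combining this with the surviving factor $\ell^{2k-3/2}$ produces $\ell^{\ell+2k-1}$ and the constant $\frac{\sqrt2\,d_k}{2(k-1)!}$, which is exactly the claimed asymptotics. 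I expect the only genuinely delicate point to be the uniform majorant in the previous step; the remaining manipulations are routine Stirling bookkeeping.
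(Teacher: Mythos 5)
Your proposal is correct and follows essentially the same route as the paper: the paper performs exactly your reindexing $i=\ell-k-j$ in display (\ref{laplace-sum}), applies the same uniform Stirling expansion $\frac{(2\ell-2i-2)!}{(\ell-k-i)!(\ell-k-i-1)!}=\frac{1}{\sqrt{\pi}}4^{\ell-i-1}\ell^{2k-3/2}\bigl(1+{\mathcal O}((1+i^2)/\ell)\bigr)$ producing the $4^{-i}$ weight, and justifies the interchange by the same standard Laplace-method argument (the paper uses uniformity for $i=o(\sqrt{\ell})$ together with monotonicity of the $\ell$-dependent factor, which is the same device as your dominated-convergence majorant plus negligible-tail check). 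The final Stirling bookkeeping with the prefactor $\ell!/(2^{\ell-1}(k-1)!)$ also matches the paper's computation exactly.
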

\begin{proof}
It is sufficient to consider
\begin{align}
\Sigma_{\ell,k}&:=\sum_{j=1}^{\ell-k}\frac{(2j+2k-2)!}{ j!(j-1)!}\cdot\frac{(2\ell-2j-k-1)!}{(\ell-j-k)!(\ell-j)!}\nonumber\\
&=\sum_{j=0}^{\ell-k-1}\frac{(2\ell-2j-2)!}{(\ell-j-k)!(\ell-j-k-1)!}\cdot\frac{(2j+k-1)!}{j!(j+k)!}\label{laplace-sum}
\end{align}
since the asymptotics of the factor in front of the sum of (\ref{S-l-k}) is easily derived from Stirling's formula; see (\ref{asymp-fac}) below.

The main contribution to the sum (\ref{laplace-sum}) comes from the terms with small values of $j$. Thus, we expand the first term inside (\ref{laplace-sum}) which gives, as $\ell\rightarrow\infty$,
\[
\frac{(2\ell-2j-2)!}{(\ell-j-k)!(\ell-j-k-1)!}=\frac{1}{\sqrt{\pi}}4^{\ell-j-1}\ell^{2k-3/2}\left(1+{\mathcal O}\left(\frac{1+j^2}{\ell}\right)\right)
\]
uniformly in $j$ with $j=o(\sqrt{\ell})$. In addition note that
\[
\frac{(2\ell-2j-2)!}{(\ell-j-k)!(\ell-j-k-1)!}
\]
is a decreasing sequence in $j$ for all $k\geq 1$.

Thus, by a standard application of the Laplace method, as $\ell\rightarrow\infty$,
\[
\Sigma_{\ell,k}\sim\frac{d_k}{\sqrt{\pi}}4^{\ell-1}\ell^{2k-3/2}
\]
which multiplied with
\begin{equation}\label{asymp-fac}
\frac{\ell!}{2^{\ell-1}(k-1)!}\sim\frac{2\sqrt{2\pi}}{(k-1)!}\left(\frac{1}{2e}\right)^{\ell}\ell^{\ell+1/2},\qquad (\ell\rightarrow\infty).
\end{equation}
gives the claimed result.
\end{proof}

The final step is to simplify the constant $d_k$ from the last lemma.

\begin{lmm}
For $k\geq 1$, we have
\[
\sum_{j\geq 0}\frac{(2j+k-1)!}{j!(j+k)!}4^{-j}=\frac{2^k}{k}.
\]
\end{lmm}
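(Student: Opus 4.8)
The plan is to recognize the summand as, up to the factor $1/k$, a generalized Catalan number, and then to evaluate the sum by specializing a generating function identity at its radius of convergence. Concretely, let $C(x)=(1-\sqrt{1-4x})/(2x)$ be the Catalan generating function, the unique power series solution of $C=1+xC^2$. I claim the key identity
\[
\sum_{j\geq 0}\frac{(2j+k-1)!}{j!(j+k)!}x^j=\frac{C(x)^k}{k}.
\]
Granting this, the lemma is immediate: at $x=1/4$ one has $C(1/4)=(1-\sqrt{0})/(1/2)=2$, whence the sum equals $2^k/k$.

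To establish the generating function identity I would use Lagrange inversion, exactly in the spirit of the previous lemma in the paper. Writing $u=C-1$, the functional equation becomes $u=x(1+u)^2$, i.e.\ $u=x\phi(u)$ with $\phi(u)=(1+u)^2$. Applying the Lagrange--B\"urmann formula (p.~126 of \cite{FlSe}) to $H(u)=(1+u)^k=C^k$ gives, for $j\geq 1$,
\[
[x^j]C^k=\frac1j[u^{j-1}]\bigl\{k(1+u)^{k-1}(1+u)^{2j}\bigr\}=\frac{k}{j}\binom{2j+k-1}{j-1}=k\cdot\frac{(2j+k-1)!}{j!(j+k)!},
\]
while the $j=0$ term is checked directly ($[x^0]C^k=1=k\cdot(k-1)!/k!$). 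Dividing by $k$ yields the claimed identity coefficientwise.

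The one genuine subtlety, and the step I expect to be the main obstacle, is that $x=1/4$ is precisely the radius of convergence of $C(x)$, so the specialization must be justified rather than taken for granted. Since all coefficients $(2j+k-1)!/(j!(j+k)!)$ are nonnegative and, by Stirling's formula, the summand at $x=1/4$ decays like $c_k\,j^{-3/2}$, the series converges at $x=1/4$; Abel's theorem then guarantees continuity of the power series on $[0,1/4]$, so that letting $x\to 1/4^-$ in the identity is legitimate and produces
\[
\sum_{j\geq 0}\frac{(2j+k-1)!}{j!(j+k)!}4^{-j}=\frac{C(1/4)^k}{k}=\frac{2^k}{k}.
\]
As an alternative that avoids introducing new notation, one may instead work with the function $T(z)=1-\sqrt{1-2z}$ already in play, via the relation $C(x)=T(2x)/(2x)$ together with $T(1/2)=1$; the Lagrange-inversion bookkeeping is essentially identical.
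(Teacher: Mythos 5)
Your proof is correct and follows essentially the same route as the paper: both identify the sum, via Lagrange inversion applied to the Catalan functional equation, as $\frac{1}{k}\bigl(\frac{1-\sqrt{1-4z}}{2z}\bigr)^k$ evaluated at $z=1/4$ (the paper works with $C(z)=zC_{\text{yours}}(z)$ satisfying $C-C^2=z$ and runs the inversion ``in reverse,'' a cosmetic difference). Your explicit justification of the evaluation at the radius of convergence via Abel's theorem is a welcome point of care that the paper's proof passes over silently.
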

\begin{proof}
Set
\[
A(z):=\sum_{j\geq 0}\frac{(2j+k-1)!}{(j+k)!j!}z^{j}.
\]
Our goal is to find $A(1/4)$. To this end, note that
\begin{align*}
[z^{j}]A(z)&=\frac{(2j+k-1)!}{(j+k)!j!}\\
&=\frac{1}{(j+k)}\binom{2j+k-1}{j}\\
&=\frac{1}{(j+k)}[\omega^{j}](1-\omega)^{-(j+k)}\\
&=\frac{1}{k}\cdot\frac{k}{(j+k)}[\omega^{j+k-k}](1-\omega)^{-(j+k)}\\
&=\frac{1}{k}[z^{j+k}]C(z)^{k}=\frac{1}{k}[z^j]\left(\frac{C(z)}{z}\right)^k,
\end{align*}
where $C(z)$ is a function which satisfies the equation $C(z)-C(z)^2=z$ and the second last equality follows by an application of the Lagrange inversion formula (in reversed direction). Thus,
\[
C(z)=\frac{1-\sqrt{1-4z}}{2}.
\]
and consequently
\[
A(z)=\frac{1}{k}\cdot\left(\frac{1-\sqrt{1-4z}}{2z}\right)^k
\]
which yields $A(1/4)=2^k/k$ as claimed.
\end{proof}

Combining the last two lemmas, we have the following asymptotic result for $S_{\ell,k}$.

\begin{pro}\label{main-asymp}
For the number $\mathrm{S}_{\ell,k}$ of tree-child networks with $\ell$ leaves and $k$ reticulation vertices arising from the star component graph of size $k$, as $\ell\rightarrow\infty$,
\[
\mathrm{S}_{\ell,k}\sim\frac{2^{k-1}\sqrt{2}}{k!}\left(\frac{2}{e}\right)^{\ell}\ell^{\ell+2k-1}.
\]
\end{pro}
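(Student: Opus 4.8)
The plan is to obtain Proposition~\ref{main-asymp} by simply combining the two immediately preceding lemmas: all of the substantive work has already been carried out there, and what remains is a substitution followed by an elementary simplification of the leading constant. Recall that the penultimate lemma gives, for $k\geq 1$ and $\ell\rightarrow\infty$,
\[
\mathrm{S}_{\ell,k}\sim \frac{\sqrt{2}\,d_k}{2(k-1)!}\left(\frac{2}{e}\right)^{\ell}\ell^{\ell+2k-1},
\]
with $d_k=\sum_{j\geq 0}(2j+k-1)!/(j!(j+k)!)\,4^{-j}$, while the last lemma evaluates this series in closed form as $d_k=2^k/k$.

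First I would substitute $d_k=2^k/k$ into the asymptotic estimate above, so that the constant in front of $(2/e)^{\ell}\ell^{\ell+2k-1}$ becomes $\sqrt{2}\,2^k/\bigl(2k\,(k-1)!\bigr)$. Next I would simplify this constant using the two identities $k\,(k-1)!=k!$ and $2^k/2=2^{k-1}$, which collapses it to $2^{k-1}\sqrt{2}/k!$ and yields exactly the claimed asymptotics.

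Since this combination is immediate, there is really no obstacle in the final step itself; the genuine difficulties were confined to the two earlier lemmas. Those were, on the one hand, extracting the leading term of the sum $\Sigma_{\ell,k}$ via the Laplace method---which required the uniform expansion with error $\mathcal{O}\bigl((1+j^2)/\ell\bigr)$ together with the monotonicity of the relevant factor in $j$ to justify restricting attention to small $j$---and, on the other hand, identifying the generating function $C(z)=(1-\sqrt{1-4z})/2$ in order to evaluate $d_k$ by an application of Lagrange inversion in the reversed direction. Granting those two lemmas, the present proposition is a one-line corollary obtained by the substitution and cancellation described above.
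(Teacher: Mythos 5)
Your proposal is correct and matches the paper exactly: the paper's entire proof is the single sentence ``Combining the last two lemmas,'' i.e., substituting $d_k=2^k/k$ into the preceding asymptotic estimate, and your simplification $\sqrt{2}\,2^k/\bigl(2k\,(k-1)!\bigr)=2^{k-1}\sqrt{2}/k!$ is exactly the intended (and correct) computation. You also rightly identify that the analytic substance lies in the two earlier lemmas, not in this concluding step.
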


\paragraph{Remaining Component Graphs.} Denote by $\mathrm{R}_{\ell,k}$ the number of tree-child networks arising from all non-star component graphs; see Figure~\ref{red-comp} for some examples. The proof of Theorem~\ref{main-result} will be finished by showing that $\mathrm{R}_{\ell,k}$ contributes asymptotically less than $\mathrm{S}_{\ell,k}$.

\begin{pro}\label{neg-asymp}
For the number $\mathrm{R}_{\ell,k}$ of tree-child networks with $\ell$ leaves and $k$ reticulation vertices arising from all component graphs except the star component graph of size $k$, we have
\[
\mathrm{R}_{\ell,k}={\mathcal O}\left(\left(\frac{2}{e}\right)^{\ell}\ell^{\ell+2k-3/2}\right).
\]
\end{pro}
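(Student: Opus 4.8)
The plan is to bound $\mathrm{R}_{\ell,k}$ by summing, over all non-star component graphs $C$ with $k+1$ vertices, an \emph{upper bound} on the number of tree-child networks produced from $C$ by the second construction (allowing overcounting, since we only need an upper bound). The key structural fact we will exploit is that a non-star component graph forces the reduced data to satisfy $t+m < k$, i.e., strictly fewer than $k$ of the reticulations are accounted for by the top one-component network and the recursively-built subnetworks $N_1,\ldots,N_t$; equivalently, the number $r := k-t-m \geq 1$ of \emph{removed} edges is strictly positive. Intuitively, each such removed edge must be re-attached by choosing a pair of edges in the already-constructed network, and this re-attachment step contributes only polynomially many choices (a factor $\mathcal{O}(\ell^{2})$ per edge by Lemma~\ref{tree-edges}, since the total edge count is $2\ell+3k-1 = \mathcal{O}(\ell)$), whereas it \emph{lowers} the exponent of $\ell$ in the dominant $\ell^{\ell+\cdots}$ growth relative to the star case. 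So the mechanism is: every non-star graph ``spends'' at least one reticulation on a re-attachment rather than on the star's direct double-edge structure, and each such spent reticulation costs a factor $\ell^{-1}$ in the leading asymptotic order.

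First I would make the count explicit for a fixed non-star component graph $C$. By the construction, one selects a one-component tree-child network $O$ with $t$ reticulations and some number of leaves, recursively builds tree-child networks $N_1,\ldots,N_t$ realizing $D_1,\ldots,D_t$ (with $\beta_1,\ldots,\beta_t$ reticulations summing to $m$), then re-attaches $r=k-t-m$ edges, and finally distributes and relabels the $\ell$ leaf-labels. Summing over the leaf-distributions among the $t+1$ pieces and using Lemma~\ref{number-o} for $O$ together with an inductive asymptotic bound $\mathrm{TC}_{\ell',k'} = \mathcal{O}((2/e)^{\ell'}(\ell')^{\ell'+2k'-1})$ for the pieces $N_i$, the factorial/Stirling bookkeeping yields a product that grows like $(2/e)^{\ell}\ell^{\ell+2(t+m)-1}$ up to the re-attachment factor. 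The re-attachment of $r$ edges contributes at most $(2\ell+3k-1)^{2r}=\mathcal{O}(\ell^{2r})$ choices, so the whole contribution of $C$ is
\[
\mathcal{O}\!\left(\left(\tfrac{2}{e}\right)^{\ell}\ell^{\ell+2(t+m)-1+2r}\right)
=\mathcal{O}\!\left(\left(\tfrac{2}{e}\right)^{\ell}\ell^{\ell+2k-1-r}\right),
\]
since $t+m+r=k$. For any non-star $C$ we have $r\geq 1$, giving the per-graph bound $\mathcal{O}((2/e)^{\ell}\ell^{\ell+2k-2})$, which is already smaller than $\mathrm{S}_{\ell,k}$ by a factor $\ell^{-1}$; the stated bound $\ell^{\ell+2k-3/2}$ is even weaker, leaving comfortable room.

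To convert per-graph bounds into the total $\mathrm{R}_{\ell,k}$, I would observe that the number of component graphs with $k+1$ vertices depends only on $k$ and is therefore a constant (for fixed $k$), so summing the finitely many per-graph bounds preserves the $\mathcal{O}$-estimate. The cleanest way to organize the argument is induction on $k$: assume $\mathrm{TC}_{\ell,k'}=\mathcal{O}((2/e)^{\ell}\ell^{\ell+2k'-1})$ for all $k'<k$ (the base case $k=0$ being phylogenetic trees), use it to bound the recursively-built pieces $N_i$, and then combine with Proposition~\ref{main-asymp} to close the induction at level $k$. The main obstacle I anticipate is the uniformity and bookkeeping in the convolution sum over leaf-distributions: one must check that summing the product of three or more factorial-ratios (from $O$, from the $N_i$, and from the multinomial relabeling) over all ways of splitting $\ell$ leaves does not inflate the polynomial order beyond $\ell^{\ell+2(t+m)-1}$. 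This is a multidimensional Laplace/Stirling estimate analogous to the one already carried out for $\mathrm{S}_{\ell,k}$, and the subtlety is ensuring the bound holds uniformly across the finitely many graph shapes and recursion depths; but since we only need an upper bound (overcounting is permitted) and each removed edge buys a clean $\ell^{-1}$ saving, the estimate should go through with room to spare.
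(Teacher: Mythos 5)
There is a genuine gap, and it is hiding behind an algebra slip at the heart of your estimate. You claim that the contribution of a fixed non-star graph is ${\mathcal O}\bigl((2/e)^{\ell}\,\ell^{\ell+2(t+m)-1+2r}\bigr)={\mathcal O}\bigl((2/e)^{\ell}\,\ell^{\ell+2k-1-r}\bigr)$, but since $t+m+r=k$ the exponent $2(t+m)-1+2r$ equals $2k-1$, not $2k-1-r$. In other words, with your global accounting — at most $(2\ell+3k-1)^2={\mathcal O}(\ell^2)$ choices per re-attached edge — each removed edge costs a factor $\ell^{+2}$ that \emph{exactly cancels} the factor $\ell^{-2}$ saved by having one fewer reticulation among the pieces. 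Your method therefore yields ${\mathcal O}\bigl((2/e)^{\ell}\,\ell^{\ell+2k-1}\bigr)$, which is the same order as $\mathrm{S}_{\ell,k}$ and proves nothing; the asserted mechanism ``each spent reticulation buys $\ell^{-1}$'' is false under per-edge counting, and your per-graph bound ${\mathcal O}(\ell^{\ell+2k-2})$ is not established (it is in fact stronger than what the paper's argument gives, namely $\ell^{\ell+2k-1-s'/2}$ with $s'\geq 1$).

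The actual saving in the paper comes from a subtler, \emph{localized} bookkeeping that your proposal does not contain. One must assign the two ends of each removed edge to the specific pieces in which they land — giving weights $\ell_0^{\delta_0}\ell_1^{\delta_1}\cdots\ell_t^{\delta_t}$ with $\delta_0+\cdots+\delta_t=2(k-t-m)$ \emph{inside} the convolution over leaf splittings, not a global $\ell^{2r}$ outside it — and then estimate the convolution sum $\sum_{\ell_0+\cdots+\ell_t=\ell}\prod_i \ell_i^{\alpha_i}$ with $\alpha_0=2t+\delta_0-3/2$ and $\alpha_j=2\beta_j+\delta_j-3/2$ (this is the paper's Lemma~\ref{sum-est}). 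The point is that each piece $j\geq 1$ with $\beta_j>0$ or $\delta_j>0$ contributes only $\ell^{\alpha_j+1}=\ell^{2\beta_j+\delta_j-1/2}$ after summation: the Stirling factor $\ell_j^{-3/2}$ beats the $+1$ power gained from summing over $\ell_j$ by exactly $1/2$, and this half power per ``active'' piece is the entire source of the gain. Finally, one needs the structural fact that $t<k$ forces at least one active piece: either $m>0$ (some $\beta_j>0$), or $r>0$ and the heads of the removed edges necessarily lie inside the sub-networks $N_j$ rather than in $O$, so some $\delta_j>0$. Your proposal correctly sets up the induction on $k$, the reduction to finitely many component graphs, and the permissibility of overcounting — all as in the paper — but without the per-piece placement of edge-ends and the half-power convolution lemma, the estimate does not close, and the step you flagged as a ``multidimensional Laplace/Stirling'' formality is precisely where the proof lives.
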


For the proof, we will use induction on $k$. However, we first need two (simple) lemmas. The first gives an  upper bound for the number $\mathrm{O}_{\ell,k}$ from Lemma~\ref{number-o}.

\begin{lmm}\label{o-est}
We have
\[
\mathrm{O}_{\ell,k}={\mathcal O}\left(\left(\frac{2}{e}\right)^{\ell}\ell^{\ell+k-1}\right)
\]
\end{lmm}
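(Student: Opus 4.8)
The plan is to apply Stirling's formula directly to the closed-form expression from Lemma~\ref{number-o}. We have exactly
\[
\mathrm{O}_{\ell,k}=\frac{(2\ell-2)!}{2^{\ell-1}(\ell-k-1)!},
\]
valid for $k\leq\ell-1$, and since $k$ is fixed while $\ell\rightarrow\infty$, this is just a product of factorials whose asymptotics I can read off. So the first step is to write Stirling's approximation for each of the three factorial-type factors: $(2\ell-2)!$, $2^{\ell-1}$, and $(\ell-k-1)!$.

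Concretely, I would use $n!\sim\sqrt{2\pi n}\,(n/e)^n$ to get $(2\ell-2)!\sim\sqrt{2\pi(2\ell-2)}\,((2\ell-2)/e)^{2\ell-2}$ and $(\ell-k-1)!\sim\sqrt{2\pi(\ell-k-1)}\,((\ell-k-1)/e)^{\ell-k-1}$. The second step is to combine the powers of $2$: the numerator contributes $4^{\ell-1}$ from $(2\ell-2)^{2\ell-2}=2^{2\ell-2}(\ell-1)^{2\ell-2}$, and dividing by $2^{\ell-1}$ leaves $2^{\ell-1}$, which together with the $(2/e)^\ell$-type scaling is what produces the target base. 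Collecting the powers of $e$ gives $e^{-(2\ell-2)+(\ell-k-1)}=e^{-\ell+1-k}$, and collecting the polynomial-in-$\ell$ factors $(\ell-1)^{2\ell-2}/(\ell-k-1)^{\ell-k-1}$ together with the $\sqrt{\cdot}$ prefactors yields the power $\ell^{\ell+k-1}$ after standard manipulations of the form $(\ell-1)^{2\ell-2}\sim\ell^{2\ell-2}e^{-2}\cdots$ and $(\ell-k-1)^{\ell-k-1}\sim\ell^{\ell-k-1}e^{\,k+1}\cdots$. Since the lemma only claims a big-$\mathcal{O}$ bound, not a precise asymptotic equivalence, I do not even need to track the constant prefactor carefully — the exponential and polynomial orders are all that matter, and any bounded leftover (powers of $\ell$ of order $\ell^{1/2}$ from the Stirling square roots, constants, etc.) is absorbed. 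Indeed, the honest computation would give a slightly sharper statement with an extra polynomial factor, so the stated $\mathcal{O}$-bound has ample slack.

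There is essentially no obstacle here: this is a routine Stirling-asymptotics bookkeeping exercise on a single explicit ratio of factorials, with no sum to estimate (unlike the earlier Laplace-method lemma) and no combinatorial subtlety. The only point requiring a little care is matching the bases and exponents so that the $2^{\ell}$, $e^{-\ell}$, and $\ell^{\ell}$ contributions assemble correctly into $(2/e)^{\ell}\ell^{\ell+k-1}$; a common slip is mishandling the $(\ell-1)^{2\ell-2}$ versus $\ell^{\ell}$ passage, where one uses $\log((\ell-1)/\ell)=-1/\ell+\mathcal{O}(1/\ell^2)$ and multiplies by the exponent $2\ell-2$ to pick up a bounded multiplicative factor. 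Since we are content with an upper bound and all such factors are bounded, these details are harmless, and the lemma follows immediately.
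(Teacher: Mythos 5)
Your proposal is correct and takes exactly the paper's route: the paper's entire proof of this lemma is the one-line remark that it follows from Stirling's formula applied to the exact expression in Lemma~\ref{number-o}, which is precisely the bookkeeping you carry out. Two cosmetic slips in your write-up do not affect the conclusion but are worth noting: the factor should be $(\ell-k-1)^{\ell-k-1}\sim\ell^{\ell-k-1}e^{-(k+1)}$ rather than $e^{k+1}$, and the Stirling square roots do not leave an $\ell^{1/2}$ to be ``absorbed'' (which, being unbounded, could not be absorbed into the stated bound anyway) --- they cancel to the constant $\sqrt{2}$, so the honest computation in fact yields the sharp asymptotics $\mathrm{O}_{\ell,k}\sim\frac{\sqrt{2}}{2}\left(\frac{2}{e}\right)^{\ell}\ell^{\ell+k-1}$, of which the claimed ${\mathcal O}$-bound is an immediate consequence.
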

\begin{proof}
This is a consequence of Stirling's formula.
\end{proof}

The second lemma gives a bound for certain sums.

\begin{lmm}\label{sum-est}
Let $\alpha_0,\ldots,\alpha_t$ be real numbers none of which equal to $-1$. Set
\[
s:=\#\{0\leq j\leq t\ :\ \alpha_j>-1\}
\]
Then,
\[
\sum_{\ell_0+\cdots+\ell_t=\ell}\ell_0^{\alpha_0}\cdots\ell_t^{\alpha_t}={\mathcal O}\left(\ell^{s-1+\sum_{\alpha_j>-1}\alpha_j}\right),
\]
where the sum runs over all positive integers $\ell_0,\ldots,\ell_t$.
\end{lmm}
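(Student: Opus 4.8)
The plan is to prove the bound by induction on $t$, peeling off the last summation variable $\ell_t$ at each step. The base case $t=0$ is immediate: the sum consists of the single term $\ell^{\alpha_0}$, and one checks directly that $\ell^{\alpha_0}=\mathcal{O}(\ell^{s-1+\sum_{\alpha_j>-1}\alpha_j})$ in both cases $\alpha_0>-1$ (where $s=1$ and the target exponent is $\alpha_0$) and $\alpha_0<-1$ (where $s=0$ and the target exponent is $-1>\alpha_0$). Before starting the induction I would also record three elementary one-variable facts: the discrete Beta estimate $\sum_{n=1}^{M-1}n^{a}(M-n)^{b}=\mathcal{O}(M^{a+b+1})$ valid for $a,b>-1$ (proved by splitting the range at $M/2$ and comparing each half with $\sum_{n\leq M}n^{a}=\mathcal{O}(M^{a+1})$); the partial-sum bound $\sum_{r=1}^{M}r^{\gamma}=\mathcal{O}(M^{\gamma+1})$ for $\gamma>-1$; and the convergence of $\sum_{n\geq 1}n^{\alpha}$ for $\alpha<-1$.

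For the inductive step I would write $\Sigma:=\sum_{\ell_0+\cdots+\ell_t=\ell}\prod_j\ell_j^{\alpha_j}=\sum_{\ell_t}\ell_t^{\alpha_t}\sum_{\ell_0+\cdots+\ell_{t-1}=\ell-\ell_t}\prod_{j<t}\ell_j^{\alpha_j}$, where $\ell_t$ ranges from $1$ to $\ell-t$, and apply the induction hypothesis to the inner sum. Writing $s'$ for the number of indices $j<t$ with $\alpha_j>-1$ and setting $\gamma:=s'-1+\sum_{j<t,\,\alpha_j>-1}\alpha_j$ for the resulting exponent, the inner sum is $\mathcal{O}((\ell-\ell_t)^{\gamma})$. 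The key observation, which I would establish up front, is that $\gamma>-1$ holds at every stage: since each of the $s'$ contributing $\alpha_j$ exceeds $-1$, we have $\sum_{j<t,\,\alpha_j>-1}\alpha_j>-s'$ and hence $\gamma>s'-1-s'=-1$. This is exactly what keeps the one-variable estimates above applicable throughout the induction.

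It then remains to bound $\sum_{\ell_t}\ell_t^{\alpha_t}(\ell-\ell_t)^{\gamma}$ in two cases. If $\alpha_t>-1$, then $s=s'+1$, and I would invoke the discrete Beta estimate with $a=\alpha_t$, $b=\gamma$, $M=\ell$ (legitimate since $\alpha_t>-1$ and $\gamma>-1$), obtaining $\mathcal{O}(\ell^{\alpha_t+\gamma+1})$; a short bookkeeping of exponents shows $\alpha_t+\gamma+1=s-1+\sum_{\alpha_j>-1}\alpha_j$, as required. If instead $\alpha_t<-1$, then $s=s'$ and the target exponent is exactly $\gamma$; here I would split the sum at $\ell_t=\ell/2$. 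For $\ell_t\leq\ell/2$ one has $(\ell-\ell_t)^{\gamma}=\mathcal{O}(\ell^{\gamma})$, and since $\sum_{\ell_t\geq 1}\ell_t^{\alpha_t}$ converges this part is $\mathcal{O}(\ell^{\gamma})$; for $\ell_t>\ell/2$ one bounds $\ell_t^{\alpha_t}\leq(\ell/2)^{\alpha_t}$ and sums $(\ell-\ell_t)^{\gamma}$ over the remaining small range $r=\ell-\ell_t\leq\ell/2$ using $\sum_{r\leq\ell/2}r^{\gamma}=\mathcal{O}(\ell^{\gamma+1})$, giving $\mathcal{O}(\ell^{\alpha_t+\gamma+1})=\mathcal{O}(\ell^{\gamma})$ because $\alpha_t<-1$. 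Combining the two ranges yields $\mathcal{O}(\ell^{\gamma})$ and closes the induction.

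I expect the only genuine subtlety to be the propagation of the condition $\gamma>-1$ through the induction: without it both the Beta estimate and the partial-sum bound $\sum_{r\leq M}r^{\gamma}=\mathcal{O}(M^{\gamma+1})$ would break down, so isolating and verifying this inequality is the crux of the argument. The remaining ingredients are routine comparisons of sums with integrals and elementary range-splitting, which I would only sketch.
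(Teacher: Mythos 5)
Your induction is a genuinely different route from the paper's proof, and most of its steps are sound, but there is a real gap at exactly the point you identify as the crux: the claim that $\gamma>-1$ holds at every stage is false. Your justification, $\sum_{j<t,\,\alpha_j>-1}\alpha_j>-s'$, is a strict inequality over $s'$ terms and breaks down when $s'=0$: if all of $\alpha_0,\ldots,\alpha_{t-1}$ are $<-1$, the sum is empty and $\gamma=s'-1+0=-1$ exactly. In that situation both of your one-variable tools are inapplicable (the discrete Beta estimate needs $b>-1$, and $\sum_{r\leq M}r^{\gamma}$ with $\gamma=-1$ is $\Theta(\log M)$, not ${\mathcal O}(M^{\gamma+1})$), and the inductive step genuinely fails when in addition $\alpha_t>-1$. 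Concretely, take $t=1$, $\alpha_0=-2$, $\alpha_1=1/2$: the inductive hypothesis only records the inner sum as ${\mathcal O}\left((\ell-\ell_1)^{-1}\right)$, and $\sum_{\ell_1=1}^{\ell-1}\ell_1^{1/2}(\ell-\ell_1)^{-1}=\Theta\left(\ell^{1/2}\log\ell\right)$, which overshoots the target ${\mathcal O}(\ell^{1/2})$ by a logarithm. The loss comes from the hypothesis discarding the information that when $s'=0$ the true inner exponent is $\max\{\alpha_j:\alpha_j<-1\}$, which is \emph{strictly} below $-1$; the bound $\ell^{-1}$ stated in the lemma is correct but too weak to propagate through your peeling argument.

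The gap is repairable in two ways. Since the sum is symmetric in the variables, you may reorder so that the exponents $<-1$ are peeled off first; then $s'=s\geq 1$ throughout whenever $s\geq 1$, your inequality $\gamma>-1$ holds, and only the case $s=0$ needs separate treatment (there the spurious logarithm is harmless, since $\alpha_t<-1$ gives $\ell^{\alpha_t}\log\ell={\mathcal O}(\ell^{-1})$, but you must say so). Alternatively, strengthen the inductive claim to carry the sharper exponent $\max_j\alpha_j$ in the all-small case. The paper avoids the issue by not inducting at all: it splits the indices into the block with $\alpha_j<-1$ and the block with $\alpha_j>-1$ in a single step, bounds the small block by ${\mathcal O}\left((\ell-i)^{\alpha}\right)$ with $\alpha=\max\{\alpha_j:\alpha_j<-1\}$ (using that some $\ell_j$ in that block is at least $(\ell-i)/(t+1-s)$ while the remaining factors are summed as convergent series), bounds the large block by an integral comparison as ${\mathcal O}\left(i^{s-1+\sum_{\alpha_j>-1}\alpha_j}\right)$, and finishes with the same split-at-$\ell/2$ convolution estimate you use. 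Retaining the strict maximum $\alpha<-1$ for the small block is precisely the ingredient your induction hypothesis forgets.
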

\begin{proof}
We first assume that $0<s<t+1$. Then, we can write the sum as
\[
\sum_{\ell_0+\cdots+\ell_t=\ell}\ell_0^{\alpha_0}\cdots\ell_t^{\alpha_t}=\sum_{i=1}^{\ell-1}\left(\sum_{\sum_{\alpha_j<-1}\ell_j=\ell-i}\left(\prod_{\alpha_j<-1}\ell_j^{\alpha_j}\right)\right)
\left(\sum_{\sum_{\alpha_j>-1}\ell_j=i}\left(\prod_{\alpha_j>-1}\ell_j^{\alpha_j}\right)\right).
\]
We will start by estimating the two terms inside this sum.

For the first term, we have
\begin{equation}\label{est-1}
\sum_{\sum_{\alpha_j<-1}\ell_j=\ell-i}\left(\prod_{\alpha_j<-1}\ell_j^{\alpha_j}\right)={\mathcal O}\left((\ell-i)^{\alpha}\right),
\end{equation}
where $\alpha=\max\{\alpha_j\ :\ \alpha_j<-1\}$. This follows because at least one of the $\ell_j$'s with $\sum_{\alpha_j<-1}\ell_j=\ell-i$ is at least $(\ell-i)/(t+1-s)$ (giving the claimed upper bound) and the series $\sum_{\ell=1}^{\infty}\ell^{\beta}$ converges for all $\beta<-1$ (giving a constant upper bound for the remaining $\ell_j$'s).

For the second term, by approximating by an integral,
\begin{align}
\sum_{\sum_{\alpha_j>-1}\ell_j=i}\left(\prod_{\alpha_j>-1}\ell_j^{\alpha_j}\right)&={\mathcal O}\left(i^{s-1+\sum_{\alpha_j>-1}\alpha_j}\int_{\sum_{\alpha_j>-1}x_j=1}\left(\prod_{\alpha_j>-1}x_j^{\alpha_j}\right){\rm d}{\bf x}\right)\nonumber\\
&={\mathcal O}\left(i^{s-1+\sum_{\alpha_j>-1}\alpha_j}\right)\label{est-2}
\end{align}
where the integral is ${\mathcal O}(1)$ since it converges.

Finally, by combining the estimates of the two terms:
\begin{align*}
\sum_{\ell_0+\cdots+\ell_t=\ell}\ell_0^{\alpha_0}\cdots\ell_t^{\alpha_t}
&={\mathcal O}\left(\sum_{i=1}^{\ell-1}i^{s-1+\sum_{\alpha_j>-1}\alpha_j}(\ell-i)^{\alpha}\right)\\
&={\mathcal O}\left(\sum_{1\leq i\leq\ell/2}i^{s-1+\sum_{\alpha_j>-1}\alpha_j}(\ell-i)^{\alpha}+\sum_{\ell/2\leq i\leq\ell-1}i^{s-1+\sum_{\alpha_j>-1}\alpha_j}(\ell-i)^{\alpha}\right).
\end{align*}
For the first sum, we have
\begin{align*}
\sum_{1\leq i\leq\ell/2}i^{s-1+\sum_{\alpha_j>-1}\alpha_j}(\ell-i)^{\alpha}&={\mathcal O}\left(\ell^{\alpha}\sum_{1\leq i\leq\ell/2}i^{s-1+\sum_{\alpha_j>-1}\alpha_j}\right)\\
&={\mathcal O}\left(\ell^{\alpha+s+\sum_{\alpha_j>-1}\alpha_j}\int_{0}^{1/2}x^{s-1+\sum_{\alpha_j>-1}\alpha_j}{\rm d}x\right)\\
&={\mathcal O}\left(\ell^{\alpha+s+\sum_{\alpha_j>-1}\alpha_j}\right).
\end{align*}
For the second sum, we have
\[
\sum_{\ell/2\leq i\leq\ell-1}i^{s-1+\sum_{\alpha_j>-1}\alpha_j}(\ell-i)^{\alpha}={\mathcal O}\left(\ell^{s-1+\sum_{\alpha_j>-1}\alpha_j}\sum_{j\geq 1}j^{\alpha}\right)={\mathcal O}\left(\ell^{s-1+\sum_{\alpha_j>-1}\alpha_j}\right).
\]
Thus,
\[
\sum_{\ell_0+\cdots+\ell_t=\ell}\ell_0^{\alpha_0}\cdots\ell_t^{\alpha_t}={\mathcal O}\left(\ell^{\alpha+s+\sum_{\alpha_j>-1}\alpha_j}\right)+{\mathcal O}\left(\ell^{s-1+\sum_{\alpha_j>-1}\alpha_j}\right)={\mathcal O}\left(\ell^{s-1+\sum_{\alpha_j>-1}\alpha_j}\right)
\]
which is the claimed result for $0<s<t$.

For the missing cases $s=0$ and $s=t+1$, the result is already implied by (\ref{est-1}) and (\ref{est-2}), respectively. This concludes the proof.
\end{proof}

We are now ready to prove Proposition~\ref{neg-asymp}.

\vspace*{0.35cm}\noindent\textit{Proof of Proposition~\ref{neg-asymp}.} We use induction on $k$. The claim is trivial for $k=1$ since $\mathrm{R}_{\ell,1}=0$.

Next, assume that the claim holds for all $k'<k$. Note that because of the induction hypothesis and Proposition~\ref{main-asymp}, we have
\begin{equation}\label{ind-hyp}
\mathrm{TC}_{\ell,k'}={\mathcal O}\left(\left(\frac{2}{e}\right)^{\ell}\ell^{\ell+2k'-1}\right).
\end{equation}
We will now prove the claim for $k$.

Fix a non-star component graph with $k+1$ vertices and consider its reduced component graph. Using the same notation as in Section~\ref{meth-CaZh}, let $t$ be the number of double edges starting from the root of the reduced component graph and $m:=\beta_1+\cdots+\beta_t$ where $\beta_j$ is the number of non-root vertices of the DAG rooted at the $j$-th child of the root in the reduced component graph (note that there are no edges between the DAGs rooted at the children because they have been removed when constructing the reduced component graph). A crucial fact used below is that $t<k$ since $t=k$ only holds for the star component graph of size $k$ (and this component graph is not considered).

Now, using the second construction from the previous section, we obtain (up to a constant) the following upper bound for the number of tree-child networks arising from the fixed component graph
\[
U_{\ell,k}:=\sum_{\ell_0+\cdots+\ell_t=\ell}\binom{\ell}{\ell_0,\ldots,\ell_t}\mathrm{O}_{\ell_0+t,t}\mathrm{TC}_{\ell_1,\beta_1}\cdots\mathrm{TC}_{\ell_t,\beta_t}\ell_0^{\delta_0}\ell_1^{\delta_1}\cdots\ell_t^{\delta_t}
\]
since we pick a one-component tree-child network $O$ with $t$ reticulation vertices labeled $1$ to $t$ whose leaves are replaced by tree-child networks with $\beta_1,\ldots,\beta_t$ reticulation vertices. Moreover, the binomial coefficient inside the sum takes care of the re-labeling and $\ell_0^{\delta_0}\cdots\ell_t^{\delta_t}$ upper bounds (up to a constant) the number of ways of re-attaching the edges which where removed from the component graph (the ends of each edge can be assigned to vertices from that graph; $\delta_0$ and $\delta_j$ then count the number of ends contained in the networks counted by $\mathrm{O}_{\ell_0+t,t}$ and $\mathrm{TC}_{\ell_j,\beta_j}$, respectively); see Lemma~\ref{tree-edges}. Here, $\delta_0,\ldots,\delta_t$ are non-negative integers such that
\begin{equation}\label{rel}
\delta_0+\ldots+\delta_t=2(k-t-m)
\end{equation}
since exactly $k-t-m$ edges are re-attached. (Note that this is indeed just an upper bound since some networks might be constructed multiple times when re-attaching edges. Moreover, since the leaves of $O$ are replaced by any tree-child networks with $\beta_1,\ldots,\beta_t$ reticulation vertices, this formula also counts networks whose component graph might not be the component graph we started out with.)

In order to estimate the above sum, by Lemma~\ref{o-est}, (\ref{ind-hyp}) and Stirling's formula,
\begin{align*}
U_{\ell,k}&=\ell!\sum_{\ell_0+\cdots+\ell_t=\ell}\frac{\ell_0^{\delta_0}\mathrm{O}_{\ell_0+t,t}}{\ell_0!}\cdot\frac{\ell_1^{\delta_1}\mathrm{TC}_{\ell_1,\beta_1}}{\ell_1!}\cdots\frac{\ell_t^{\delta_t}\mathrm{TC}_{\ell_t,\beta_t}}{\ell_t!}\\
&={\mathcal O}\left(\left(\frac{1}{e}\right)^{\ell}\ell^{\ell+1/2}\sum_{\ell_0+\cdots+\ell_t=\ell}2^{\ell_0}\ell_0^{2t+\delta_0-3/2}\cdot2^{\ell_1}\ell_1^{2\beta_1+\delta_1-3/2}\cdots2^{\ell_t}\ell_t^{2\beta_t+\delta_t-3/2}\right)\\
&={\mathcal O}\left(\left(\frac{2}{e}\right)^{\ell}\ell^{\ell+1/2}\sum_{\ell_0+\cdots+\ell_t=\ell}\ell_0^{2t+\delta_0-3/2}\cdot\ell_1^{2\beta_1+\delta_1-3/2}\cdots\ell_t^{2\beta_t+\delta_t-3/2}\right).
\end{align*}
Note that $2t+\delta_0-3/2>-1$ (since $t\geq 1$) and $2\beta_j+\delta_j-3/2>-1$ if and only if $\beta_j>0$ or $\delta_j>0$ and at least one these $t$ numbers is $>-1$ since $t<k$; see (\ref{rel}). Set
\[
s':=\#\{1\leq j\leq t\ :\ \beta_j>0\ \text{or}\ \delta_j>0\}\geq 1.
\]
Then, by Lemma~\ref{sum-est}, where the $s$ in that lemma is $s'+1$, we have
\[
U_{\ell,k}={\mathcal O}\left(\left(\frac{2}{e}\right)^{\ell}\ell^{\ell+1/2}\ell^{s'+2t+\delta_0-3/2+\sum_{j=1}^{t}(2\beta_j+\delta_j)-3t/2+3(t-s')/2}\right)\\
={\mathcal O}\left(\left(\frac{2}{e}\right)^{\ell}\ell^{\ell+2k-1-s'/2}\right)
\]
which gives the required bound for $U_{\ell,k}$ since $s'\geq 1$.

Finally, summing over all non-star component graphs with $k+1$ vertices (whose number just depends on $k$) gives the claimed result. (Recall that we do not care about double-counting since we only need a bound which is smaller than the first-order asymptotics from Proposition~\ref{main-asymp}.)\qed

\section{Conclusion}

The main purpose of this short note was to provide a second approach to the counting of tree-child networks with few reticulation vertices. This second approach is based on a recent approach of Cardona and Zhang \cite{CaZh} which was devised for the purpose of algorithmically solving the counting problem for tree-child networks. We showed that this approach can be used to solve the asymptotic counting problem, too. Moreover, this approach is capable of yielding the multiplicative constant of the first-order asymptotics, something which was left open by the previous approach from \cite{FuGiMa1,FuGiMa2}.

We are going to end the paper by giving a brief comparison between the previous approach and the approach from this note. In the previous approach, tree-child networks where built from so-called \textit{sparsened skeletons}, whereas in the current note, we used the component graphs from \cite{CaZh}. As pointed out in \cite{FuGiMa1} the sparsened skeletons which give the main contribution to the asymptotics are all rooted, non-plane trees with $k$ leaves whose number does not permit an easy formula. Because of this, the authors from \cite{FuGiMa1} expected that the multiplicative constant in the first-order asymptotics has no simple form; see the end of Section 3 in \cite{FuGiMa1}. However, this turned out to be wrong as shown in this note. In fact, using component graphs simplifies the situation because the main contribution to the asymptotics just comes from a single component graph, namely the star component graph; see our analysis in Section~\ref{aa}.

Both approaches are also capable of giving exact formulas for small values of the number $k$ of reticulation vertices; for the approach from \cite{FuGiMa1} this was shown in \cite{FuGiMa2} and for the approach from \cite{CaZh}, such results were contained in that publication.

Overall our note shows that the approach from \cite{CaZh} seems to be more flexible than the approach from \cite{FuGiMa1} for the counting of tree-child networks with fixed $k$. However, the approach from \cite{FuGiMa1} has still one advantage over the one from \cite{CaZh}, namely, it also applies, \textit{mutatis mutandis}, to the counting of normal networks which is not the case for the approach from \cite{CaZh} (because it is not clear how to avoid the creation of edges that violate the normal condition). In fact, the approach from \cite{FuGiMa1} was used to solve a question concerning the counting of normal networks from \cite{CaZh}; see \cite{FuGiMa2}.

\section*{Acknowledgments}
We thank Apoorva Khare who after seeing (\ref{cor-node}) for $k=1,2,3$ during a talk of the first author at the \textit{6th India-Taiwan Conference on Combinatorics} in Varanasi suggested that the multiplicative constant might have the claimed form. We also thank the anonymous referees for their many insightful suggestions.

The first and second author acknowledge financial support by the Ministry of Science and Technology, Taiwan under the grants MOST-107-2115-M-009-010-MY2 and MOST-109-2115-M-004-003-MY2; the third author was partially supported by the grant MOST-110-2115-M-017-003-MY3.

\end{document}